\theoremstyle{theorem}
\newtheorem{theorem}{Theorem}
\theoremstyle{definition}
\newtheorem{definition}{Definition}
\newtheorem{remark}{Remark}
\newtheorem{corollary}{Corollary}
\theoremstyle{remark}
\title{Daehee, hyperharmonic, and power sum polynomials}
\author[J.L. Cereceda]{Jos\'e Luis Cereceda}
\address{Collado Villalba, 28400 -- Madrid, Spain}
\email{jl.cereceda@movistar.es}
\begin{document}

\begin{abstract}
In this paper we consider the Daehee numbers and polynomials of the first and second kind, and give several explicit representations for them. In particular, we express the Daehee polynomials as the derivative of a generalized binomial coefficient. This is done by performing the Stirling transform of the power sum polynomial $S_k(x)$ associated with the sum of $k$th powers of the first $n$ positive integers $S_k(n) = 1^k + 2^k + \cdots + n^k$. Furthermore, we show the relationship between the Daehee polynomials and the hyperharmonic polynomials. This allows us to also express the hyperharmonic polynomials as the derivative of a generalized binomial coefficient. In addition, we reformulate and generalize a number of identities involving the Stirling, Bernoulli, and harmonic numbers, in terms of the Daehee and hyperharmonic polynomials. Finally, in the light of a recent result of Karg{\i}n {\it et al.\/}, we discuss a further extension of the obtained identities in terms of the $r$-Stirling numbers.
\end{abstract}

\maketitle

\vspace{-.5cm}

\section{Introduction}

Before defining the Daehee numbers and polynomials we recall the classical Bernoulli polynomials and Stirling numbers of the first and second kind. The Bernoulli polynomials $B_k(x)$ are defined by the generating function
\begin{equation*}
\frac{t e^{xt}}{e^t -1} = \sum_{k=0}^{\infty} B_k(x) \frac{t^k}{k!}.
\end{equation*}
When $x=0$, $ B_k = B_k(0)$ are the Bernoulli numbers $B_0 =1$, $B_1 = -\frac{1}{2}$, $B_2 =\frac{1}{6}$, $B_3 =0$, $B_4 =-\frac{1}{30}$, etc. An important fact about the Bernoulli numbers is that $B_{2k+1} =0$ for all $k \geq 1$.

For $k,j$ nonnegative integers, we denote by $\overline{\genfrac{[}{]}{0pt}{}{k}{j}} = (-1)^{k-j} \genfrac{[}{]}{0pt}{}{k}{j}$ the signed Stirling numbers of the first kind (with $\genfrac{[}{]}{0pt}{}{k}{j}$ being the unsigned Stirling numbers of the first kind), and we denote by $\overline{\genfrac{\{}{\}}{0pt}{}{k}{j}} = (-1)^{k-j} \genfrac{\{}{\}}{0pt}{}{k}{j}$ the signed Stirling numbers of the second kind (with $\genfrac{\{}{\}}{0pt}{}{k}{j}$ being the unsigned Stirling numbers of the second kind). They are defined through the relations
\begin{equation*}
(x)_k = \prod_{i=0}^{k-1} (x-i) = \sum_{j=0}^{k} \overline{\genfrac{[}{]}{0pt}{}{k}{j}} x^j,
\end{equation*}
where $\overline{\genfrac{[}{]}{0pt}{}{k}{0}} = \delta_{k,0}$, $\overline{\genfrac{[}{]}{0pt}{}{k}{j}} =0$ for $j >k$, and
\begin{equation*}
x^k = \sum_{j=0}^{k} \genfrac{\{}{\}}{0pt}{}{k}{j} (x)_j,
\end{equation*}
where $\genfrac{\{}{\}}{0pt}{}{k}{0} = \delta_{k,0}$, $\genfrac{\{}{\}}{0pt}{}{k}{j} =0$ for $j >k$, and $\delta_{k,0}$ is the Kronecker delta. Alternatively, the (signed) Stirling numbers of the first and the (unsigned) Stirling numbers of the second kind can be defined by the exponential generating function
\begin{align*}
\sum_{n=k}^{\infty} \overline{\genfrac{[}{]}{0pt}{}{n}{k}} \frac{t^n}{n!} & = \frac{1}{k!} \big( \ln{(1+t)} \big)^k,
\intertext{and}
\sum_{n=k}^{\infty} \genfrac{\{}{\}}{0pt}{}{n}{k} \frac{t^n}{n!} & = \frac{1}{k!} \big( e^t -1 \big)^k,
\end{align*}
respectively. Importantly, the Stirling numbers satisfy the orthogonality relations
\begin{equation}\label{ort1}
\sum_{r=0}^{i} \genfrac{\{}{\}}{0pt}{}{i}{r} \overline{\genfrac{[}{]}{0pt}{}{r}{j}} = \delta_{i,j}  \quad\text{and}\quad
\sum_{r=0}^{i} \overline{\genfrac{[}{]}{0pt}{}{i}{r}} \genfrac{\{}{\}}{0pt}{}{r}{j} = \delta_{i,j},
\end{equation}
or, equivalently,
\begin{equation}\label{ort2}
\sum_{r=0}^{i} \overline{\genfrac{\{}{\}}{0pt}{}{i}{r}} \genfrac{[}{]}{0pt}{}{r}{j} = \delta_{i,j}  \quad\text{and}\quad
\sum_{r=0}^{i} \genfrac{[}{]}{0pt}{}{i}{r} \overline{\genfrac{\{}{\}}{0pt}{}{r}{j}} = \delta_{i,j},
\end{equation}
for $i,j \geq 0$, showing the inverse nature of the Stirling numbers of the first and second kind.

Based on the above orthogonality relations, the Stirling transform of sequences of polynomials is defined as follows (see, e.g., \cite{boya} and \cite[Appendix A]{boya2}).
\begin{definition}
Relying on \eqref{ort1}, and given a sequence of polynomials in the real variable $x$, $\{P_j(x) \in \mathbb{Q}[x] \}$, its Stirling transform is the new sequence of polynomials $\{Q_j(x) \in \mathbb{Q}[x] \}$ defined by
\begin{equation*}\label{st1}
Q_k(x) = \sum_{j=0}^{k} \genfrac{\{}{\}}{0pt}{}{k}{j} P_j(x),
\end{equation*}
with inversion
\begin{equation*}\label{st2}
P_k(x) = \sum_{j=0}^{k} \overline{\genfrac{[}{]}{0pt}{}{k}{j}} Q_j(x).
\end{equation*}

Likewise, relying on \eqref{ort2}, we can equally define the Stirling transform of the sequence of polynomials $\{P_j(x) \}$ to be
\begin{equation*}\label{st3}
Q_k(x) = \sum_{j=0}^{k} \overline{\genfrac{\{}{\}}{0pt}{}{k}{j}} P_j(x),
\end{equation*}
with inversion
\begin{equation*}\label{st4}
P_k(x) = \sum_{j=0}^{k} \genfrac{[}{]}{0pt}{}{k}{j} Q_j(x).
\end{equation*}
\end{definition}

Next we define the Daehee polynomials of the first and second kind.
\begin{definition}
Following Kim and Kim \cite[Equation (1.1)]{kim1}, the Daehee polynomials, denoted by $D_k(x)$, are defined by the generating function
\begin{equation}\label{gf1}
\sum_{k=0}^{\infty} D_k(x) \frac{t^k}{k!} = \frac{\ln{(1+t)}}{t} (1+t)^x.
\end{equation}
When $x=0$, $D_k = D_k(0)$ are the Daehee numbers. The generating function \eqref{gf1} is equivalent to having
\begin{equation}\label{def11}
D_k(x) = \sum_{j=0}^{k} \overline{\genfrac{[}{]}{0pt}{}{k}{j}} B_j(x),
\end{equation}
with inverse relation
\begin{equation}\label{def12}
B_k(x) = \sum_{j=0}^{k} \genfrac{\{}{\}}{0pt}{}{k}{j} D_j(x).
\end{equation}
We sometimes refer to $D_k$ and $D_k(x)$ as the Daehee numbers and polynomials of the first kind, to be distinguished from the Daehee numbers and polynomials of the second kind which are defined as follows.
\end{definition}

\begin{definition}
In this paper the Daehee polynomials of the second kind, denoted by $\widehat{D}_k(x)$, are defined by the generating function
\begin{equation}\label{gf2}
\sum_{k=0}^{\infty} \widehat{D}_k(x) \frac{t^k}{k!} = \frac{\ln{(1-t)}}{-t} (1-t)^{1-x}.
\end{equation}
When $x=0$, $\widehat{D}_k = \widehat{D}_k(0)$ are the Daehee numbers of the second kind. The polynomials $\widehat{D}_k(x)$ can be defined equivalently by
\begin{equation}\label{def21}
\widehat{D}_k(x) = \sum_{j=0}^{k} \genfrac{[}{]}{0pt}{}{k}{j} B_j(x),
\end{equation}
with inverse relation
\begin{equation}\label{def22}
B_k(x) = \sum_{j=0}^{k} \overline{\genfrac{\{}{\}}{0pt}{}{k}{j}} \widehat{D}_j(x).
\end{equation}
\end{definition}

Furthermore, from \eqref{gf1} and \eqref{gf2}, it follows that the Daehee polynomials of the first and second kind just defined are related by
\begin{equation}\label{relation}
\widehat{D}_k(x) = (-1)^k D_k(1-x) \quad\text{and}\quad D_k(x) = (-1)^k \widehat{D}_k(1-x).
\end{equation}

\begin{remark}
In \cite[Equation (2.21)]{kim1}, Kim and Kim defined the Daehee polynomials of the second kind by the generating function
\begin{equation*}
\sum_{k=0}^{\infty} \widehat{D}_k(x) \frac{t^k}{k!} = \frac{\ln{(1+t)}}{t} (1+t)^{1-x},
\end{equation*}
which leads to
\begin{equation*}
\widehat{D}_k(x) = (-1)^k \sum_{j=0}^{k} \genfrac{[}{]}{0pt}{}{k}{j} B_j(x).
\end{equation*}
On the other hand, in \cite[Equation (30)]{kim2}, Kim {\it et al.\/} defined the Daehee polynomials of the second kind by the generating function
\begin{equation*}
\sum_{k=0}^{\infty} \widehat{D}_k(x) \frac{t^k}{k!} = \frac{\ln{(1-t)}}{-t} (1-t)^{x+1},
\end{equation*}
which leads to
\begin{equation*}
\widehat{D}_k(x) = \sum_{j=0}^{k} \genfrac{[}{]}{0pt}{}{k}{j} B_j(-x).
\end{equation*}
In this paper we choose the generating function of the Daehee polynomials of the second kind to be \eqref{gf2} so that the sequence $\{ \widehat{D}_j(x) \}$ is related to the sequence of Bernoulli polynomials $\{ B_j(x) \}$ as shown in \eqref{def21} and \eqref{def22}.
\end{remark}

Next we define the well-known concept of generalized binomial coefficient.

\begin{definition}
For real $x$ and for integer $k \geq 0$, the generalized binomial coefficient $\binom{x}{k}$ is defined as
\begin{equation*}
\binom{x}{k} = \frac{ x(x-1)(x-2)\ldots (x-k+1)}{k!}, \quad k \geq 1, \quad\text{and}\quad \binom{x}{0} =1.
\end{equation*}
\end{definition}

The rest of the paper is organized as follows. In Section 2 we express the Daehee polynomials $D_k(x)$ and $\widehat{D}_k(x)$ as the derivative of a generalized binomial coefficient. As we will see, this follows in a straightforward way by performing the Stirling transform of the power sum polynomial $S_k(x)$ associated with the sum of $k$th powers of the first $n$ positive integers $S_k(n) = 1^k + 2^k + \cdots + n^k$. In Section 3 we exhibit the close relationship between the Daehee polynomials and the hyperharmonic polynomials, and express the latter as the derivative of a generalized binomial coefficient. In Section 4 we consider a couple of identities derived by Boyadzhiev \cite[Theorem 6]{boya} involving the Stirling, Bernoulli, and harmonic numbers, and show that each of them is a special case of a more general identity involving the Bernoulli and hyperharmonic polynomials (or, alternatively, the Bernoulli and Daehee polynomials). In Section 5 we look at the so-called {\it harmonic polynomials} introduced by Cheon and El-Mikkawy \cite[Section 5]{cheon} and show how they relate to the hyperharmonic and Daehee polynomials. Finally, in Section 6, in the light of a recent result derived by Karg{\i}n {\it et al.\/} \cite[Theorem 1]{kargin}, we discuss a further extension of the obtained identities (in particular, of the identities \eqref{can1} and \eqref{can2} below) in terms of the $r$-Stirling numbers.

\section{Daehee and power sum polynomials}

Before proving Theorem \ref{th:1} below, let us recall the well-known relationship between the sums of powers of integers $S_k(n) = 1^k + 2^k + \cdots + n^k$ and the Bernoulli polynomials, namely \cite[Equation (15)]{atlan}
\begin{equation}\label{powerber}
S_k(n) = \frac{1}{k+1} \big( B_{k+1}(n+1) - B_{k+1}(1) \big), \quad k \geq 0.
\end{equation}
From \eqref{powerber}, it turns out that $S_k(n)$ is a polynomial in $n$ of degree $k+1$ without constant term. We can naturally extend $S_k(n)$ to a polynomial $S_k(x)$ in which $x$ takes any real value, so that $S_k(x) =1^k + 2^k + \cdots + x^k$ whenever $x$ is a positive integer (with $S_k(0) =0$). In view of \eqref{powerber}, a possible representation of $S_k(x)$ is given by
\begin{equation}\label{powerber2}
S_k(x) = \frac{1}{k+1} \big( B_{k+1}(x+1) - B_{k+1}(1) \big), \quad k \geq 0.
\end{equation}

On the other hand, the derivative of $B_{k+1}(x)$ with respect to $x$ is given by $B_{k+1}^{\prime}(x) = (k+1)B_x(x)$, $k \geq 0$ \cite[Equation (13)]{atlan}. From \eqref{powerber2} we thus have
\begin{equation}\label{der}
S_k^{\prime}(x) = B_{k}(x+1), \quad k \geq 0.
\end{equation}

Next we establish the following result expressing the Daehee polynomials as the derivative of a certain generalized binomial coefficient.
\begin{theorem}\label{th:1}
For $k \geq 0$, the Daehee polynomials of the first and second kind are given by
\begin{align}
D_k(x) & = k! \frac{\text{d}}{\text{d}x} \binom{x}{k+1}, \label{th:11}
\intertext{and}
\widehat{D}_k(x) & = k! \frac{\text{d}}{\text{d}x} \binom{x+k-1}{k+1}, \label{th:12}
\end{align}
respectively.
\end{theorem}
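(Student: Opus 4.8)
The plan is to prove the first identity \eqref{th:11} directly and then deduce \eqref{th:12} from the reflection formula \eqref{relation}. The natural starting point is the defining relation \eqref{def11}, $D_k(x) = \sum_{j=0}^{k}\overline{\genfrac{[}{]}{0pt}{}{k}{j}} B_j(x)$, which already presents $D_k(x)$ as a Stirling transform of the Bernoulli polynomials. I would then feed in the power-sum data through \eqref{der}: since $B_j(x) = S_j^{\prime}(x-1)$, interchanging the (finite) sum with the derivative gives
\begin{equation*}
D_k(x) = \frac{\text{d}}{\text{d}x}\sum_{j=0}^{k}\overline{\genfrac{[}{]}{0pt}{}{k}{j}}\, S_j(x-1),
\end{equation*}
so everything reduces to evaluating the inner Stirling transform of the shifted power-sum polynomials.

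For that inner sum I would first record a binomial representation of $S_j$. Writing $m^j = \sum_{i}\genfrac{\{}{\}}{0pt}{}{j}{i}(m)_i$ and summing over $m$ via the hockey-stick identity $\sum_{m=1}^{n}\binom{m}{i}=\binom{n+1}{i+1}$ yields, for $j \geq 1$,
\begin{equation*}
S_j(y) = \sum_{i=1}^{j}\genfrac{\{}{\}}{0pt}{}{j}{i}\, i!\binom{y+1}{i+1},
\end{equation*}
an identity of polynomials in $y$ (both sides have degree $j+1$ and agree at every positive integer). Setting $y = x-1$ turns $\binom{y+1}{i+1}$ into $\binom{x}{i+1}$. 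Substituting this into the inner sum, swapping the order of summation, and pulling out the factor $i!\binom{x}{i+1}$, the remaining coefficient is $\sum_{j=i}^{k}\overline{\genfrac{[}{]}{0pt}{}{k}{j}}\genfrac{\{}{\}}{0pt}{}{j}{i}$, which equals $\delta_{k,i}$ by the orthogonality relation \eqref{ort1}. Hence the inner sum collapses to $k!\binom{x}{k+1}$, and differentiating gives \eqref{th:11}.

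For the second identity I would invoke the reflection formula \eqref{relation}, $\widehat{D}_k(x) = (-1)^k D_k(1-x)$, together with the upper-negation identity
\begin{equation*}
\binom{1-x}{k+1} = (-1)^{k+1}\binom{x+k-1}{k+1},
\end{equation*}
which is verified by matching the two falling-factorial products factor by factor. Inserting \eqref{th:11} into $(-1)^k D_k(1-x)$ and applying the chain rule, the sign $-1$ produced by differentiating $1-x$ combines with the sign $(-1)^{k+1}$ coming from the reflection identity to give $(-1)^{k+2}=(-1)^k$; this converts $k!\,\frac{\text{d}}{\text{d}w}\binom{w}{k+1}\big|_{w=1-x}$ into $k!\,\frac{\text{d}}{\text{d}x}\binom{x+k-1}{k+1}$, which is \eqref{th:12}.

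The main obstacle is the bookkeeping at the low-index boundary. The representation of $S_j$ above is valid only for $j \geq 1$: for $j=0$ its right-hand side is empty and vanishes, while $S_0(y)=y$. This traces back to the hockey-stick identity, which reads $\sum_{m=1}^{n}\binom{m}{i}=\binom{n+1}{i+1}$ only for $i \geq 1$ (at $i=0$ one has $n$ on the left but $n+1$ on the right). None of this affects the argument for $k \geq 1$, since the outer $j=0$ term is annihilated by $\overline{\genfrac{[}{]}{0pt}{}{k}{0}}=\delta_{k,0}=0$ and the inner $i=0$ term never appears because $\genfrac{\{}{\}}{0pt}{}{j}{0}=0$ for $j \geq 1$, so the orthogonality collapse goes through cleanly. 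The remaining case $k=0$ is handled on its own and is immediate: $D_0(x)=B_0(x)=1=0!\,\frac{\text{d}}{\text{d}x}\binom{x}{1}$.
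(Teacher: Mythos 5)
Your proof is correct. For \eqref{th:11} it is essentially the paper's argument: the paper quotes the representation $S_k(x)+\delta_{k,0}=\sum_{j} j!\genfrac{\{}{\}}{0pt}{}{k}{j}\binom{x+1}{j+1}$ from the literature (its \eqref{var1}) and phrases the collapse as ``inversion of the Stirling transform,'' whereas you rederive that representation from the hockey-stick identity and carry out the orthogonality collapse explicitly --- the same computation, with your separate treatment of $k=0$ playing exactly the role of the paper's $\delta_{k,0}$ term. Where you genuinely diverge is \eqref{th:12}: the paper gives an independent, parallel derivation starting from a second binomial representation of the power sum, $S_k(x)=\sum_{j} j!\,\overline{\genfrac{\{}{\}}{0pt}{}{k}{j}}\binom{x+j}{j+1}$ (Gould's formula, its \eqref{var2}), inverting, differentiating, and comparing with \eqref{def21}; you instead deduce \eqref{th:12} from \eqref{th:11} via the reflection formula \eqref{relation} and the upper-negation identity \eqref{relation2}. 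This is legitimate, since \eqref{relation} is established from the generating functions before the theorem, and it is shorter; the paper in fact records in a remark immediately after the theorem that the two formulas are consistent with \eqref{relation}, which in your arrangement is built in rather than checked afterwards. What the paper's longer route buys is a self-contained second derivation together with the representation \eqref{var2}, whose variants it reuses later (e.g.\ in Section 4).
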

\begin{proof}
To show relation \eqref{th:11}, first we express $S_k(x)$ in terms of the unsigned Stirling numbers of the second kind (see e.g. \cite{shirali})
\begin{equation}\label{var1}
S_k(x) + \delta_{k,0} = \sum_{j=0}^k j! \genfrac{\{}{\}}{0pt}{}{k}{j} \binom{x+1}{j+1}, \quad k \geq 0.
\end{equation}
Noting that $\sum_{j=0}^{k}\overline{\genfrac{[}{]}{0pt}{}{k}{j}} \delta_{j,0} = \delta_{k,0}$, the inverse of the Stirling transform in \eqref{var1} is given by
\begin{equation*}
k! \binom{x+1}{k+1} = \delta_{k,0} + \sum_{j=0}^{k} \overline{\genfrac{[}{]}{0pt}{}{k}{j}} S_j(x).
\end{equation*}
Differentiating both sides of this equation and using \eqref{der} gives
\begin{equation*}
k! \frac{\text{d}}{\text{d}x} \binom{x+1}{k+1} = \sum_{j=0}^{k} \overline{\genfrac{[}{]}{0pt}{}{k}{j}} S_j^{\prime}(x) =
\sum_{j=0}^{k} \overline{\genfrac{[}{]}{0pt}{}{k}{j}} B_j(x+1).
\end{equation*}
Thus, making the transformation $x \to x-1$ and comparing the resulting equation with \eqref{def11}, we get \eqref{th:11}.

To show relation \eqref{th:12}, we start with the following expression for $S_k(x)$ in terms of the signed Stirling numbers of the second kind (see e.g. \cite[Equation (7.5)]{gould})
\begin{equation}\label{var2}
S_k(x) = \sum_{j=0}^k j!  \overline{\genfrac{\{}{\}}{0pt}{}{k}{j}} \binom{x+j}{j+1}, \quad k \geq 0.
\end{equation}
Now, by inversion of the Stirling transform in \eqref{var2}, we have
\begin{equation*}
k! \binom{x+k}{k+1} = \sum_{j=0}^{k} \genfrac{[}{]}{0pt}{}{k}{j} S_j(x).
\end{equation*}
Similarly, differentiating both sides of this equation and using \eqref{der}, we obtain
\begin{equation*}
k! \frac{\text{d}}{\text{d}x} \binom{x+k}{k+1} = \sum_{j=0}^{k} \genfrac{[}{]}{0pt}{}{k}{j} S_j^{\prime}(x) =
\sum_{j=0}^{k} \genfrac{[}{]}{0pt}{}{k}{j} B_j(x+1),
\end{equation*}
and then, replacing $x$ by $x-1$ and comparing the resulting equation with \eqref{def21}, we get \eqref{th:12}.
\end{proof}

A few remarks are in order regarding Theorem \ref{th:1}.
\begin{remark}
By using the relation
\begin{equation}\label{relation2}
\binom{-x}{k} = (-1)^k \binom{x+k-1}{k},
\end{equation}
it is easily seen that the expressions \eqref{th:11} and \eqref{th:12} for $D_k(x)$ and $\widehat{D}_k(x)$ satisfy \eqref{relation}.
\end{remark}

\begin{remark}
When evaluated at $x=0$, the derivatives in \eqref{th:11} and \eqref{th:12} are given by
\begin{equation*}
\left. \frac{\text{d}}{\text{d}x} \binom{x}{k+1}\right|_{x=0} = \frac{(-1)^{k}}{k+1} \quad\text{and}\quad
\left. \frac{\text{d}}{\text{d}x} \binom{x+k-1}{k+1}\right|_{x=0} = -\frac{1}{k(k+1)},
\end{equation*}
and then the Daehee numbers of the first and second kind are given by
\begin{align}
D_k & = (-1)^k \frac{k!}{k+1} = \sum_{j=0}^{k} \overline{\genfrac{[}{]}{0pt}{}{k}{j}} B_j , \quad k \geq 0, \label{dn1}
\intertext{and}
\widehat{D}_k & = -\frac{(k-1)!}{k+1} = \sum_{j=1}^{k} \genfrac{[}{]}{0pt}{}{k}{j} B_j , \quad k \geq 1, \quad
\widehat{D}_0 =1. \label{dn2}
\end{align}
respectively. Furthermore, from \eqref{dn1} and \eqref{dn2}, we obtain the inverse relations
\begin{align}
B_k & = \sum_{j=0}^{k} (-1)^j \frac{j!}{j+1} \genfrac{\{}{\}}{0pt}{}{k}{j} , \quad k \geq 0,  \label{ber1}
\intertext{and}
B_k & = (-1)^{k+1} \sum_{j=1}^{k} (-1)^j \frac{(j-1)!}{j+1} \genfrac{\{}{\}}{0pt}{}{k}{j} , \quad k \geq 1, \quad B_0 =1, \label{ber2}
\end{align}
respectively. Incidentally, it is to be noted that relation \eqref{ber1} is a fairly well-known identity, while relation \eqref{ber2} is seemingly a lesser well-known identity. Identity \eqref{ber2} has recently been derived by a totally different method in \cite[Theorem 1.1, Equation (3)]{jha}.
\end{remark}

\begin{remark}
Searching through the internet for an explicit formula of the derivative of $\binom{x}{n}$ with respect to $x$, we came across with the following result answering a question posted on the Mathematics Stack Exchange website. Specifically, for integer $n \geq 1$, we can learn from \cite[Theorem 1]{stack} that
\begin{equation}\label{stack}
\frac{\text{d}}{\text{d}x} \binom{x}{n} = \sum_{i=1}^{n} \frac{(-1)^{i-1}}{i} \binom{x}{n-i}.
\end{equation}
Hence, combining \eqref{th:11} and \eqref{stack}, we immediately deduce that
\begin{equation}\label{exp11}
D_k(x) = k! \sum_{j=0}^{k} \frac{(-1)^j}{j+1} \binom{x}{k-j}.
\end{equation}
Moreover, since $\widehat{D}_k(x) = (-1)^k D_k(1-x)$, and applying \eqref{relation2}, from \eqref{exp11} we obtain
\begin{equation}\label{exp21}
\widehat{D}_k(x) = k! \sum_{j=0}^{k} \frac{1}{j+1} \binom{x+k-j-2}{k-j}.
\end{equation}
Next we write down the first few instances of the Daehee polynomials of the first and second kind as obtained from \eqref{exp11} and \eqref{exp21}, namely
\begin{align*}
D_0(x) & = 1, \quad D_1(x) = x -\tfrac{1}{2}, \quad D_2(x) = x^2 -2x +\tfrac{2}{3}, \\
D_3(x) & = x^3 -\tfrac{9}{2} x^2 +\tfrac{11}{2}x -\tfrac{3}{2}, \quad D_4(x) = x^4 -8x^3 +21x^2 -20x +\tfrac{24}{5}, \\
D_5(x) & = x^5 -\tfrac{25}{2}x^4 +\tfrac{170}{3}x^3 -\tfrac{225}{2}x^2 + \tfrac{274}{3}x -20, \\
D_6(x) & = x^6 - 18x^5 +125x^4 - 420x^3 + 696x^2 - 504x + \tfrac{720}{7}, \\
\intertext{and}
\widehat{D}_0(x) & = 1, \quad \widehat{D}_1(x) = x -\tfrac{1}{2}, \quad \widehat{D}_2(x) = x^2 -\tfrac{1}{3}, \\
\widehat{D}_3(x) & = x^3 +\tfrac{3}{2} x^2 -\tfrac{1}{2}x -\tfrac{1}{2}, \quad \widehat{D}_4(x) = x^4 +4x^3 +3x^2 -2x -\tfrac{6}{5}, \\
\widehat{D}_5(x) & = x^5 +\tfrac{15}{2}x^4 +\tfrac{50}{3}x^3 +\tfrac{15}{2}x^2 - \tfrac{26}{3}x -4, \\
\widehat{D}_6(x) & = x^6 + 12x^5 + 50x^4 + 80x^3 + 21x^2 - 44x - \tfrac{120}{7}.
\end{align*}

Notice that the expressions in \eqref{exp11} and \eqref{exp21} can alternatively be written as
\begin{align}
D_k(x) & = (-1)^k k! \sum_{j=0}^{k} \frac{(-1)^j}{k+1-j} \binom{x}{j}, \label{exp12} \\
\intertext{and}
\widehat{D}_k(x) & = k! \sum_{j=0}^{k} \frac{1}{k+1-j} \binom{x+j-2}{j}, \label{exp22}
\end{align}
respectively. Therefore, from \eqref{def11} and \eqref{exp12}, we have
\begin{align}
& \sum_{j=0}^{k} \overline{\genfrac{[}{]}{0pt}{}{k}{j}} B_j(x) =  k! \sum_{j=0}^{k} \frac{(-1)^{k-j}}{k+1-j}
\binom{x}{j}, \label{res11} \\
\intertext{with inverse relation}
& B_k(x) = \sum_{j=0}^{k} j! \genfrac{\{}{\}}{0pt}{}{k}{j} \sum_{i=0}^{j} \frac{(-1)^{j-i}}{j+1-i} \binom{x}{i}. \notag
\end{align}
On the other hand, from \eqref{def21} and \eqref{exp22}, we have
\begin{align*}
& \sum_{j=0}^{k} \genfrac{[}{]}{0pt}{}{k}{j} B_j(x) = k! \sum_{j=0}^{k} \frac{1}{k+1-j}
\binom{x+j-2}{j} , \label{res21} \\
\intertext{with inverse relation}
& B_k(x) = \sum_{j=0}^{k} j! \overline{\genfrac{\{}{\}}{0pt}{}{k}{j}} \sum_{i=0}^{j} \frac{1}{j+1-i}
\binom{x+i-2}{i}. \notag
\end{align*}
Incidentally, identity \eqref{res11} was obtained by Todorov \cite[Theorem 5]{todorov}, (expressed in a somewhat different form), and a new proof of it was subsequently given by Boyadzhiev \cite[Theorem 5]{boya}.
\end{remark}

\begin{remark}
Integrating both sides of the expressions \eqref{th:11} and \eqref{th:12} for $D_k(x)$ and $\widehat{D}_k(x)$ yields
\begin{align*}
F_k(x) & = \int_{0}^{x} D_k(t) dt = k! \binom{x}{k+1}, \\
\intertext{and}
\widehat{F}_k(x) & = \int_{0}^{x} \widehat{D}_k(t) dt = k! \binom{x+k-1}{k+1} + \delta_{k,0},
\end{align*}
respectively. In particular, it follows that $F_k(k+1) = k!$ and $\widehat{F}_k(2) = k! + \delta_{k,0}$. For example, using the explicit polynomials $D_5(x)$ and $\widehat{D}_5(x)$ given above, we find
\begin{align*}
& \int_{0}^{6} \big( x^5 -\tfrac{25}{2}x^4 +\tfrac{170}{3}x^3 -\tfrac{225}{2}x^2 + \tfrac{274}{3}x -20 \big) dx = 5!, \\
\intertext{and}
& \int_{0}^{2} \big( x^5 +\tfrac{15}{2}x^4 +\tfrac{50}{3}x^3 +\tfrac{15}{2}x^2 - \tfrac{26}{3}x -4 \big) dx = 5!.
\end{align*}
\end{remark}

\begin{remark}
For integer $r \geq 1$, Kim {\it et al.\/} \cite{kim2} defined the Daehee polynomials of order $r$, $D_k^{(r)}(x)$, by the generating function
\begin{equation*}
\sum_{k=0}^{\infty} D_k^{(r)}(x) \frac{t^k}{k!} = \left(\frac{\ln{(1+t)}}{t}\right)^{r} \! (1+t)^{x}.
\end{equation*}
On the other hand, Liu and Srivastava \cite{liu} defined the N\"{o}rlund polynomials of the second kind $b_k^{(x)}$ as follows
\begin{equation*}
\left( \frac{t}{\ln{(1+t)}} \right)^{x} = \sum_{k=0}^{\infty} b_k^{(x)} t^k.
\end{equation*}
As shown in \cite[Theorem 3]{mustafa}, the Daehee polynomials of order $r$ are related to the N\"{o}rlund polynomials of the second kind by
\begin{equation}\label{order}
D_k^{(r)}(x) = k! \sum_{j=0}^{k} \binom{x}{k-j} b_j^{(-r)},
\end{equation}
for $k \geq 0$ and $r \geq 1$. Furthermore, it turns out that $b_j^{(-1)} = \frac{(-1)^j}{j+1}$ \cite[Remark 4]{liu}, and then, as it should be, the expression \eqref{order} reduces to \eqref{exp11} when $r =1$.
\end{remark}

\section{Daehee and hyperharmonic polynomials}

It is well known that the $r$th harmonic number is the sum of the reciprocals of the first $r$ positive integers
\begin{equation*}
H_r = 1 + \frac{1}{2} + \frac{1}{3} + \cdots + \frac{1}{r},
\end{equation*}
with $H_0 =0$. The generating function of the harmonic numbers is given by
\begin{equation}\label{gfh}
\sum_{r=0}^{\infty} H_r t^r = -\frac{\ln{(1-t)}}{1-t}.
\end{equation}

Conway and Guy \cite[pp. 258--259]{conway} introduced the {\it hyperharmonic numbers}, which are a generalization of the harmonic numbers. Specifically, the $r$th
hyperharmonic number of order $n$, denoted by $H_r^{(n)}$, is defined by
\begin{equation}\label{hhn}
H_r^{(n)} =\left\{
              \begin{array}{ll}
                0 & \text{if}\,\, r \leq 0 \,\, \text{or}\,\, n <0,  \\
                \frac{1}{r}  &  \text{if}\,\, r >0 \,\, \text{and}\,\, n =0,\\
                \sum_{i=1}^r H_i^{(n-1)} &  \text{if}\,\, n,r \geq 1.
              \end{array}
            \right.
\end{equation}
Note that $H_r^{(1)}$ is the ordinary harmonic number $H_r$. The hyperharmonic numbers $H_r^{(n)}$ have the generating function
\begin{equation*}
\sum_{r=0}^{\infty} H_r^{(n)} t^r = -\frac{\ln{(1-t)}}{(1-t)^{n}},
\end{equation*}
which follows from \eqref{gfh} and the definition in \eqref{hhn}.

Of particular interest for our purpose here is the following representation for the hyperharmonic numbers $H_r^{(n)}$ given by Benjamin {\it et al.} \cite[Theorem 1]{benjamin} as a weighted sum of the fractions $\frac{1}{1}, \frac{1}{2}, \ldots, \frac{1}{r}$, namely
\begin{equation}\label{frac}
H_{r}^{(n)} = \sum_{t=1}^{r} \binom{n+r-t-1}{r-t} \frac{1}{t}, \quad n \geq 0, r \geq 1.
\end{equation}

Next we introduce the concept of hyperharmonic polynomials $H_r^{(x)}$, which generalize the hyperharmonic numbers $H_r^{(n)}$.

\begin{definition}
For $x \in R$, the hyperharmonic polynomials $H_r^{(x)} \in \mathbb{Q}[x]$ are defined by the generating function
\begin{equation}\label{gfhp}
\sum_{r=0}^{\infty} H_r^{(x)} t^r = -\frac{\ln{(1-t)}}{(1-t)^{x}},
\end{equation}
with $H_0^{(x)} =0$. By virtue of \eqref{frac} the hyperharmonic polynomials admit the representation
\begin{equation}\label{frac2}
H_{r}^{(x)} = \sum_{t=1}^{r} \binom{x+r-t-1}{r-t} \frac{1}{t},
\end{equation}
from which it follows that $H_{r}^{(x)}$ is a polynomial in $x$ of degree $r-1$ with leading coefficient $\frac{1}{(r-1)!}$ and constant term $\frac{1}{r}$. The first few hyperharmonic polynomials are
\begin{align*}
H_1^{(x)} & = 1, \quad H_2^{(x)} = x +\tfrac{1}{2}, \quad H_3^{(x)} = \tfrac{1}{2}x^2 +x +\tfrac{1}{3}, \\
H_4^{(x)} & = \tfrac{1}{6}x^3 +\tfrac{3}{4} x^2 +\tfrac{11}{12}x +\tfrac{1}{4}, \quad H_5^{(x)} = \tfrac{1}{24}x^4 +\tfrac{1}{3}x^3 +\tfrac{7}{8}x^2 +\tfrac{5}{6}x +\tfrac{1}{5}, \\
H_6^{(x)} & = \tfrac{1}{120}x^5 +\tfrac{5}{48}x^4 +\tfrac{17}{36}x^3 +\tfrac{15}{16}x^2 + \tfrac{137}{180}x +\tfrac{1}{6}, \\
H_7^{(x)} & = \tfrac{1}{720}x^6 + \tfrac{1}{40}x^5 +\tfrac{25}{144}x^4 +\tfrac{7}{12}x^3 + \tfrac{29}{30}x^2 +\tfrac{7}{10}x + \tfrac{1}{7}.
\end{align*}
\end{definition}

Moreover, the hyperharmonic numbers fulfill the property, see \cite[Equation (7)]{benjamin}
\begin{equation*}
H_{r}^{(n)} = \sum_{t=1}^{r} \binom{n+r-t-2}{r-t} H_t, \quad n\geq 0, r \geq 1,
\end{equation*}
and, therefore, we also have that
\begin{equation}\label{frac3}
H_{r}^{(x)} = \sum_{t=1}^{r} \binom{x+r-t-2}{r-t} H_t .
\end{equation}


Now we are ready to prove the following theorem, which establishes the relationship between the hyperharmonic polynomials and the Daehee polynomials of the first and second kinds.
\begin{theorem}\label{th:2}
For integer $k \geq 0$, we have
\begin{align}
D_k(x) & = (-1)^k k! H_{k+1}^{(-x)},  \label{th:21}
\intertext{and\vspace{-1mm}}
\widehat{D}_k(x) & = k! H_{k+1}^{(x-1)}, \label{th:22}
\intertext{where\vspace{-3mm}}
H_{k+1}^{(-x)} & = \sum_{j=0}^{k} \frac{(-1)^j}{k+1-j} \binom{x}{j}, \label{th:23}
\intertext{and\vspace{-3mm}}
H_{k+1}^{(x-1)} & = \sum_{j=0}^{k} \frac{1}{k+1-j} \binom{x+j-2}{j}. \label{th:24}
\end{align}
\end{theorem}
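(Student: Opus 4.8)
The plan is to derive the closed forms \eqref{th:23} and \eqref{th:24} directly from the representation \eqref{frac2}, and then read off \eqref{th:21} and \eqref{th:22} by matching against the explicit expressions \eqref{exp12} and \eqref{exp22} already obtained for $D_k(x)$ and $\widehat{D}_k(x)$ in Section 2. In other words, once the two evaluations of $H_{k+1}^{(\cdot)}$ are in hand, the theorem reduces to a term-by-term comparison.

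First I would establish \eqref{th:24}, the more routine of the two. Setting $r=k+1$ and replacing $x$ by $x-1$ in \eqref{frac2} gives
\[
H_{k+1}^{(x-1)} = \sum_{t=1}^{k+1} \binom{x+k-t-1}{k+1-t} \frac{1}{t}.
\]
The substitution $j=k+1-t$ reverses the summation order, turns the binomial coefficient into $\binom{x+j-2}{j}$ and the weight $\frac{1}{t}$ into $\frac{1}{k+1-j}$, yielding \eqref{th:24} at once. Comparing this with \eqref{exp22} then gives \eqref{th:22}.

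For \eqref{th:23} I would proceed similarly but with one extra reflection step. Setting $r=k+1$ and replacing $x$ by $-x$ in \eqref{frac2} produces
\[
H_{k+1}^{(-x)} = \sum_{t=1}^{k+1} \binom{-x+k-t}{k+1-t} \frac{1}{t}.
\]
Since the upper entry is negative in $x$, I would apply the reflection formula \eqref{relation2} to rewrite $\binom{-x+k-t}{k+1-t} = \binom{-(x-k+t)}{k+1-t} = (-1)^{k+1-t}\binom{x}{k+1-t}$. After the same reindexing $j=k+1-t$ this collapses to \eqref{th:23}, and a comparison with \eqref{exp12} delivers \eqref{th:21}.

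The only genuine obstacle is careful bookkeeping: one must apply the reflection \eqref{relation2} in the correct direction and track the limits after reindexing, noting that $t=1,\dots,k+1$ corresponds to $j=k,\dots,0$. Once that single application of \eqref{relation2} is handled, both halves of the theorem follow by pure reindexing and direct comparison with the formulas of Section 2, so no further input is needed.
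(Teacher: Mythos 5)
Your proposal is correct, and your index manipulations (the substitution $j=k+1-t$ and the single application of \eqref{relation2} with $\binom{-x+k-t}{k+1-t}=(-1)^{k+1-t}\binom{x}{k+1-t}$) check out; the derivation of \eqref{th:23} and \eqref{th:24} from \eqref{frac2} is in fact exactly what the paper does. Where you diverge is in how \eqref{th:21} and \eqref{th:22} are established. The paper proves \eqref{th:22} directly by comparing generating functions: substituting $x\to x-1$ in \eqref{gfhp} and multiplying \eqref{gf2} by $t$ both yield $-\ln(1-t)/(1-t)^{x-1}$, so the coefficients must agree; \eqref{th:21} then follows from the reflection \eqref{relation}. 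You instead identify the closed forms \eqref{th:23} and \eqref{th:24} term by term with \eqref{exp12} and \eqref{exp22}. That is legitimate, but note the dependency chain: \eqref{exp12} and \eqref{exp22} come from Theorem \ref{th:1} combined with the derivative formula \eqref{stack} for $\binom{x}{n}$, so your proof of Theorem \ref{th:2} leans on that external formula, whereas the paper's generating-function argument is self-contained and needs only the defining series \eqref{gf2} and \eqref{gfhp}. The trade-off is that your route makes the combinatorial content (the explicit binomial sums) do all the work and requires no analytic comparison of series, while the paper's route is shorter and cleanly separates the identification of the polynomials (via generating functions) from their explicit evaluation (via \eqref{frac2}). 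Both are valid proofs of the same statement.
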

\begin{proof}
First we prove \eqref{th:22}. Then \eqref{th:21} follows from the second relation in \eqref{relation}. Substituting $x \to x-1$ in \eqref{gfhp}, and noting that $H_{0}^{(x-1)} =0$, we have
\begin{equation*}
\sum_{k=0}^{\infty} H_{k+1}^{(x-1)} t^{k+1} = -\frac{\ln{(1-t)}}{(1-t)^{x-1}}.
\end{equation*}
On the other hand, from \eqref{gf2} it follows that
\begin{equation*}
\sum_{k=0}^{\infty} \frac{\widehat{D}_k(x)}{k!} t^{k+1} = -\frac{\ln{(1-t)}}{(1-t)^{x-1}}.
\end{equation*}\nopagebreak
Thus, comparing the left-hand sides of the last two equations gives \eqref{th:22}.

\pagebreak
Moreover, substituting $x \to -x$ and $r \to k+1$ in \eqref{frac2}, and using \eqref{relation2}, we get, after some minor manipulations, \eqref{th:23}. Likewise, \eqref{th:24} is obtained by setting $x \to x-1$ and $r \to k+1$ in \eqref{frac2}.
\end{proof}

By combining \eqref{th:12} and \eqref{th:22} we obtain the following result which gives us the hyperharmonic polynomial $H_{k+1}^{(x)}$ as the derivative of a generalized binomial coefficient.
\begin{corollary}\label{col:1}
For integer $k \geq 0$, we have
\begin{equation}
H_{k+1}^{(x)} = \frac{\text{d}}{\text{d}x} \binom{x+k}{k+1}. \label{col:11}
\end{equation}
\begin{proof}
It follows immediately by setting $x \to x+1$ in \eqref{th:12} and \eqref{th:22}, and then equating the right-hand sides of the resulting equations.
\end{proof}
\end{corollary}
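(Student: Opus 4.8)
The plan is to exploit the fact that Theorems \ref{th:1} and \ref{th:2} furnish two different expressions for one and the same object, namely the Daehee polynomial of the second kind $\widehat{D}_k(x)$. Equation \eqref{th:12} presents it as the derivative of a generalized binomial coefficient, while equation \eqref{th:22} presents it as a hyperharmonic polynomial. Since the left-hand sides of these two equations are identical, their right-hand sides must agree, and after cancelling the common factor $k!$ this yields the intermediate identity
\[
\frac{\text{d}}{\text{d}x} \binom{x+k-1}{k+1} = H_{k+1}^{(x-1)}.
\]

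Next I would observe that the target identity \eqref{col:11} differs from this intermediate one only by a unit shift of the argument, so the natural move is to substitute $x \to x+1$ throughout. On the right-hand side the exponent collapses, since $H_{k+1}^{((x+1)-1)} = H_{k+1}^{(x)}$, while on the left the upper entry of the binomial coefficient becomes $(x+1)+k-1 = x+k$, producing $\frac{\text{d}}{\text{d}x}\binom{x+k}{k+1}$. Equating the two shifted sides then gives precisely \eqref{col:11}.

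The only point demanding a moment's care is the interaction between differentiation and the shift $x \to x+1$. Writing $G(x) = \binom{x+k-1}{k+1}$, one has $\binom{x+k}{k+1} = G(x+1)$, and by the chain rule $\frac{\text{d}}{\text{d}x} G(x+1) = G'(x+1)$, the chain factor being $1$ because the shift is by a constant. Hence no spurious factor is introduced and the derivative form is preserved under the substitution. Apart from this elementary bookkeeping, the corollary is an immediate consequence of the two theorems and presents no genuine obstacle.
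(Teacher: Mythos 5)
Your proposal is correct and follows essentially the same route as the paper: substitute $x \to x+1$ in \eqref{th:12} and \eqref{th:22}, cancel the common factor $k!$, and equate the right-hand sides. The extra remark on the chain rule under a constant shift is a harmless piece of added care that the paper leaves implicit.
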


A few remarks are in order regarding Theorem \ref{th:2} and Corollary \ref{col:1}.
\begin{remark}
By combining \eqref{th:21} and \eqref{th:23} [\eqref{th:22} and \eqref{th:24}], we recover \eqref{exp12} [\eqref{exp22}].
\end{remark}

\begin{remark}
Recently, for positive integers $n$ and $r$, Dil and Muniro\u{g}lu, see \cite[Definition 25]{dil}, defined the hyperharmonic numbers of negative order $H_n^{(-r)}$ by
\begin{equation}\label{dil}
H_n^{(-r)} =\left\{
              \begin{array}{ll}
                \dfrac{(-1)^r r!}{n^{\underline{r+1}}}, & n >r \geq 1; \\[4mm]
                \displaystyle\sum_{i=0}^{n-1} (-1)^i \binom{r}{i} \dfrac{1}{n-i}, & r \geq n >1; \\[5mm]
                1, & n=1.
              \end{array}
            \right.
\end{equation}
Hence, for integer $n \geq 1$, we can use \eqref{dil} to properly define $H_{j+1}^{(-n)}$ as follows, see \cite[Equation (22)]{cere}
\begin{equation}\label{dil2}
H_{j+1}^{(-n)} =
\left\{
  \begin{array}{ll}
   \displaystyle\sum_{i=0}^{j} \frac{(-1)^i}{j+1-i} \binom{n}{i}, &  j \geq 1 ; \\
    1, & j=0.
  \end{array}
\right.
\end{equation}
Clearly, the definition \eqref{dil2} is consistent with \eqref{th:23} because the latter equation reduces to \eqref{dil2} when $x \to n$ and $k \to j$. In particular, for $n=1$, we obtain from \eqref{dil2}
\begin{equation}\label{dil3}
H_{k+1}^{(-1)} =
\left\{
  \begin{array}{ll}
   -\dfrac{1}{k(k+1)}, &  k \geq 1 ; \\
    1, & k=0.
  \end{array}
\right.
\end{equation}
\end{remark}

\begin{remark}
By letting $x=0$ in \eqref{th:21} and \eqref{th:22}, we readily obtain the Daehee numbers of the first and second kind, respectively. Indeed, putting $x=0$ in \eqref{th:21} yields
\begin{equation*}
D_k = (-1)^k k! H_{k+1}^{(0)} = (-1)^k \frac{k!}{k+1},
\end{equation*}
in accordance with \eqref{dn1}. Likewise, putting $x=0$ in \eqref{th:22} and using \eqref{dil3} yields
\begin{equation*}
\widehat{D}_k = k! H_{k+1}^{(-1)} =
\left\{
  \begin{array}{ll}
   -\dfrac{(k-1)!}{k+1}, &  k \geq 1 ; \\
    1, & k=0,
  \end{array}
\right.
\end{equation*}
in accordance with \eqref{dn2}.
\end{remark}

\begin{remark}
For integers $n,r \geq 1$, the hyperharmonic numbers fulfill the recurrence relation \cite{benjamin}
\begin{equation*}
H_n^{(r)} = H_{n-1}^{(r)} + H_n^{(r-1)}.
\end{equation*}
As a matter of fact, the above recurrence relation also holds for the hyperharmonic polynomials $H_{k+1}^{(x)}$ since these reduce to $H_{k+1}^{(r)}$ when $x =r$, for each $r =1,2,\ldots\,$. This means that
\begin{equation}\label{rec1}
H_{k+1}^{(x)} = H_k^{(x)} + H_{k+1}^{(x-1)}, \quad k \geq 0.
\end{equation}
As can be easily verified, using \eqref{rec1} in combination with \eqref{th:22} gives the following recurrence relation for the Daehee polynomials of the second kind
\begin{equation}\label{rec2}
\widehat{D}_k(x+1) = \widehat{D}_k(x) + k \widehat{D}_{k-1}(x+1),
\end{equation}
from which we can in turn deduce the following recurrence relation for the Daehee polynomials of the first kind
\begin{equation}\label{rec3}
D_k(1-x) = D_k(-x) + k D_{k-1}(-x).
\end{equation}
Incidentally, this last recurrence was previously derived in \cite[Equation (11)]{mustafa2}.

Furthermore, it is worth noting that, actually, \eqref{rec2} and \eqref{rec3} follow directly by differentiating the well-known Pascal's recurrence for the binomial coefficients, $\binom{k}{j} = \binom{k-1}{j} + \binom{k-1}{j-1} $, and then applying Theorem \ref{th:1}. So, for example, starting from
\begin{equation*}
\binom{x+k}{k+1} = \binom{x+k-1}{k+1} + \binom{x+k-1}{k},
\end{equation*}
we have that
\begin{equation*}
k! \frac{\text{d}}{\text{d}x} \binom{x+k}{k+1} = k! \frac{\text{d}}{\text{d}x} \binom{x+k-1}{k+1} +
k (k-1)! \frac{\text{d}}{\text{d}x} \binom{x+k-1}{k},
\end{equation*}
which, in view of \eqref{th:12}, is just the recurrence \eqref{rec2}.
\end{remark}

\begin{remark}
The power sum polynomial $S_k(x)$ can be expressed in terms of hyperharmonic polynomials as follows. Combining \eqref{def21} and \eqref{th:22}, we get
\begin{equation*}
B_k(x+1) = \sum_{j=0}^{k} \overline{\genfrac{\{}{\}}{0pt}{}{k}{j}} \widehat{D}_j(x+1) =
\sum_{j=0}^{k} \overline{\genfrac{\{}{\}}{0pt}{}{k}{j}} j! H_{j+1}^{(x)}.
\end{equation*}
Thus, from \eqref{powerber2} we obtain
\begin{equation*}
S_k(x) = \frac{1}{k+1} \left[ \sum_{j=0}^{k+1} \overline{\genfrac{\{}{\}}{0pt}{}{k+1}{j}} j! H_{j+1}^{(x)} - B_{k+1}(1)
\right].
\end{equation*}
Now, since $B_{k+1}(1) = (-1)^{k+1} B_{k+1}$, $k\geq 0$ \cite[Equation (8)]{atlan}, and taking into account \eqref{ber1}, we have
\begin{equation}\label{cere1}
S_k(x) = \frac{(-1)^{k+1}}{k+1} \sum_{j=0}^{k+1} (-1)^j j! \genfrac{\{}{\}}{0pt}{}{k+1}{j} \left( H_{j+1}^{(x)}
- \frac{1}{j+1} \right).
\end{equation}
Of course, when we replace $x$ by a positive integer $n \geq 1$, equation \eqref{cere1} becomes
\begin{equation*}
1^k + 2^k + \cdots + n^k = \frac{(-1)^{k+1}}{k+1} \sum_{j=0}^{k+1} (-1)^j j! \genfrac{\{}{\}}{0pt}{}{k+1}{j} \left( H_{j+1}^{(n)}
- \frac{1}{j+1} \right).
\end{equation*}
Incidentally, the last expression for $S_k(n)$ was previously derived in \cite[Theorem 1.1]{cere}.
\end{remark}

\begin{remark}
Notice that, for $x =2$, equation \eqref{th:22} gives $\widehat{D}_k(2) = k! H_{k+1}^{(1)} = k! H_{k+1}$. Therefore, from \eqref{def22} we have
\begin{equation}\label{bin2}
B_k(2) = \sum_{j=0}^{k} j! \overline{\genfrac{\{}{\}}{0pt}{}{k}{j}} H_{j+1}.
\end{equation}
On the other hand, from the difference equation $B_k(x+1)- B_k(x) = k x^{k-1}$ , $k \geq 1$ \cite[Equation (12)]{atlan}, it follows that $B_k(2) = k + (-1)^k B_k$, which holds for all $k \geq 0$, and then from \eqref{bin2} we get the identity (see \cite[Remark 2.3]{cere})
\begin{equation*}
B_k = (-1)^{k+1}k +  \sum_{j=0}^{k} (-1)^j j! \genfrac{\{}{\}}{0pt}{}{k}{j} H_{j+1}, \quad k \geq 0,
\end{equation*}
which involves the Bernoulli and harmonic numbers, as well as the Stirling numbers of the second kind.
\end{remark}

\begin{remark}
For integer $n \geq 0$, it follows from \eqref{col:11} that
\begin{equation}\label{derhn}
H_n^{(x)} = \frac{\text{d}}{\text{d}x} \binom{x+n-1}{n}.
\end{equation}
When $x$ takes on an integer value $r \geq 0$, to get the hyperharmonic number $H_n^{(r)}$ the derivative in \eqref{derhn} should be evaluated at $x =r$, that is,
\begin{equation*}
H_n^{(r)} = \frac{\text{d}}{\text{d}x} \left. \binom{x+n-1}{n} \right|_{x=r},
\end{equation*}
or, equivalently,
\begin{equation}\label{cere2}
H_n^{(r)} = \frac{\text{d}}{\text{d}x} \left. \binom{x+n+r-1}{n} \right|_{x=0}.
\end{equation}
Relation \eqref{cere2} was already established in \cite[Proposition 11]{dil} and \cite[Section 3]{cere2}. For the case that $r =1$, we retrieve the well-known result
\begin{equation*}
H_n = \frac{\text{d}}{\text{d}x} \left. \binom{x+n-1}{n} \right|_{x=1} = \frac{\text{d}}{\text{d}x} \left. \binom{x+n}{n} \right|_{x=0}.
\end{equation*}
\end{remark}

\section{Boyadzhiev's identities}

In \cite{boya} Boyadzhiev derived, among other things, several identities involving the Stirling, Bernoulli, and harmonic numbers. Specifically, in \cite[Theorem 6]{boya}, we find the following two identities (in our notation):
\begin{align}
& B_k = \sum_{j=1}^{k+1} \genfrac{\{}{\}}{0pt}{}{k+1}{j} (-1)^{j-1} (j-1)! H_j, \quad k \geq 0, \label{boya1} \\
& \sum_{j=1}^{k} \overline{\genfrac{[}{]}{0pt}{}{k}{j}} (-1)^j B_{j-1} = (-1)^k \frac{k!}{k^2}, \quad k
\geq 1. \label{boya2}
\end{align}
(We remark that, out of the four identities appearing in \cite[Theorem 6]{boya}, here we only consider explicitly the second and third ones since the first (fourth) identity in \cite[Theorem 6]{boya} is just the inversion of the second (third) one.)

Next we generalize the above identities \eqref{boya1} and \eqref{boya2} by using the machinery we have developed involving the Daehee, hyperharmonic, and power sum polynomials. To this end, we make use of a variant of the formulas for the power sum polynomials in \eqref{var1} and \eqref{var2}, namely (see \cite{cere3}, \cite[Theorem 5]{marques}, \cite[Corollary 2]{dil2})
\begin{align}
S_k(x) & = \sum_{j=0}^{k} j! \genfrac{\{}{\}}{0pt}{}{k+1}{j+1} \binom{x}{j+1}, \quad k \geq 0, \label{var3}
\intertext{and (see \cite[Theorem 3]{dil2})}
S_k(x) + \delta_{k,0} & = \sum_{j=0}^{k} j! \overline{\genfrac{\{}{\}}{0pt}{}{k+1}{j+1}} \binom{x+j+1}{j+1} \quad k \geq 0. \label{var4}
\end{align}

Moreover, we point out that the orthogonality relations in \eqref{ort1} and \eqref{ort2} can also be stated in the form
\begin{align*}
& \sum_{r=0}^{i} \genfrac{\{}{\}}{0pt}{}{i+1}{r+1} \overline{\genfrac{[}{]}{0pt}{}{r+1}{j+1}} = \delta_{i,j},
\quad\quad  \sum_{r=0}^{i} \overline{\genfrac{[}{]}{0pt}{}{i+1}{r+1}} \genfrac{\{}{\}}{0pt}{}{r+1}{j+1} = \delta_{i,j},
\intertext{and}
& \sum_{r=0}^{i} \overline{\genfrac{\{}{\}}{0pt}{}{i+1}{r+1}} \genfrac{[}{]}{0pt}{}{r+1}{j+1} = \delta_{i,j},
\quad\quad  \sum_{r=0}^{i} \genfrac{[}{]}{0pt}{}{i+1}{r+1} \overline{\genfrac{\{}{\}}{0pt}{}{r+1}{j+1}} = \delta_{i,j}.
\end{align*}

Hence, by inversion of the Stirling transform in \eqref{var3}, we have
\begin{equation*}
k! \binom{x}{k+1} = \sum_{j=0}^k \overline{\genfrac{[}{]}{0pt}{}{k+1}{j+1}} S_j(x).
\end{equation*}
Differentiating both sides of this equation and using \eqref{der} gives
\begin{equation*}
k! \frac{\text{d}}{\text{d}x} \binom{x}{k+1} = \sum_{j=0}^{k} \overline{\genfrac{[}{]}{0pt}{}{k+1}{j+1}} S_j^{\prime}(x) =
\sum_{j=0}^{k} \overline{\genfrac{[}{]}{0pt}{}{k+1}{j+1}} B_j(x+1).
\end{equation*}
Therefore, it follows from \eqref{th:11} that
\begin{equation*}
D_k(x-1) = \sum_{j=0}^{k} \overline{\genfrac{[}{]}{0pt}{}{k+1}{j+1}} B_j(x),
\end{equation*}
and then, using \eqref{exp12}, we get
\begin{equation}\label{gen1}
\sum_{j=0}^{k} \overline{\genfrac{[}{]}{0pt}{}{k+1}{j+1}} B_j(x) =
k! \sum_{j=0}^{k} \frac{(-1)^{k-j}}{k+1-j} \binom{x-1}{j},
\end{equation}
which generalizes \eqref{boya2}. Indeed, for the case that $x=1$ identity \eqref{gen1} becomes
\begin{equation*}
\sum_{j=0}^{k} (-1)^j \overline{\genfrac{[}{]}{0pt}{}{k+1}{j+1}} B_j = (-1)^k \frac{k!}{k+1},
\end{equation*}
which can be easily converted into \eqref{boya2}.

On the other hand, the inversion of \eqref{gen1} is given by
\begin{equation}\label{gen2}
B_k(x) = \sum_{j=0}^{k} (-1)^j j! \genfrac{\{}{\}}{0pt}{}{k+1}{j+1} \sum_{i=0}^{j} \frac{(-1)^{i}}{j+1-i} \binom{x-1}{i},
\end{equation}
which generalizes \eqref{boya1}. Indeed, since $\binom{-1}{i} = (-1)^i$, we have that
\begin{equation*}
\sum_{i=0}^{j} \frac{(-1)^{-i}}{j+1-i} \binom{-1}{i} = H_{j+1},
\end{equation*}
and then, for the case that $x=0$ identity \eqref{gen2} becomes
\begin{equation*}
B_k = \sum_{j=0}^{k} \genfrac{\{}{\}}{0pt}{}{k+1}{j+1} (-1)^j j! H_{j+1},
\end{equation*}
which can be easily converted into \eqref{boya1}.

We now turn our attention to the polynomial formula \eqref{var4}. Noting that $\sum_{j=0}^{k}\genfrac{[}{]}{0pt}{}{k+1}{j+1} \delta_{j,0} = \genfrac{[}{]}{0pt}{}{k+1}{1} =k! $, the inverse of the Stirling transform in \eqref{var4} turns out to be
\begin{equation}\label{gazette}
k! \binom{x+k+1}{k+1} = k! + \sum_{j=0}^k \genfrac{[}{]}{0pt}{}{k+1}{j+1} S_j(x).
\end{equation}
Differentiating both sides of this equation and using \eqref{der} gives
\begin{equation*}
k! \frac{\text{d}}{\text{d}x} \binom{x+k+1}{k+1} = \sum_{j=0}^{k} \genfrac{[}{]}{0pt}{}{k+1}{j+1} S_j^{\prime}(x) =
\sum_{j=0}^{k} \genfrac{[}{]}{0pt}{}{k+1}{j+1} B_j(x+1).
\end{equation*}
Consequently, we have from \eqref{th:12}
\begin{equation*}
\widehat{D}_k(x+1) = \sum_{j=0}^{k} \genfrac{[}{]}{0pt}{}{k+1}{j+1} B_j(x),
\end{equation*}
or, equivalently,
\begin{equation}\label{can1}
\sum_{j=0}^{k} \genfrac{[}{]}{0pt}{}{k+1}{j+1} B_j(x) = k! H_{k+1}^{(x)},
\end{equation}
where we have used \eqref{th:22}. Thus, we find from \eqref{th:24} that
\begin{equation*}\label{gen3}
\sum_{j=0}^{k} \genfrac{[}{]}{0pt}{}{k+1}{j+1} B_j(x) = k! \sum_{j=0}^{k} \frac{1}{k+1-j} \binom{x+j-1}{j},
\end{equation*}
with inverse relation
\begin{equation}\label{gen4}
B_k(x) = \sum_{j=0}^{k} j! \overline{\genfrac{\{}{\}}{0pt}{}{k+1}{j+1}} \sum_{i=0}^{j} \frac{1}{j+1-i} \binom{x+i-1}{i}.
\end{equation}
When $x=0$, from \eqref{gen4} we obtain the identity
\begin{equation*}
B_k = (-1)^k \sum_{j=0}^{k} (-1)^j \frac{j!}{j+1} \genfrac{\{}{\}}{0pt}{}{k+1}{j+1},
\end{equation*}
which may be compared with \eqref{ber1}.

\begin{remark}
A proof of equation \eqref{gazette} without relying on \eqref{var4} was given in \cite{cere4} for the case in which $x$ is a natural number $n =1,2,\ldots\,$.
\end{remark}

\begin{remark}
As is well known (see, e.g., \cite{benjamin} and \cite{conway}), for integers $n\geq 0$ and $r\geq 1$, the hyperharmonic and harmonic numbers are related by
\begin{equation*}
H_n^{(r)} = \binom{n+r-1}{r-1} \big( H_{n+r-1} - H_{r-1} \big).
\end{equation*}
Hence, taking $x \to i+1$ in \eqref{can1}, where $i$ is a nonnegative integer, we obtain
\begin{equation*}
\sum_{j=0}^{k} \genfrac{[}{]}{0pt}{}{k+1}{j+1} B_j(i+1) = k! \binom{k+i+1}{i} \big( H_{k+i+1} - H_{i} \big),
\end{equation*}
with inverse relation
\begin{equation*}
\sum_{j=0}^{k} (-1)^j \genfrac{\{}{\}}{0pt}{}{k+1}{j+1} \binom{j+i+1}{i} j! \big( H_{j+i+1} - H_{i} \big) = (-1)^k B_k(i+1),
\end{equation*}
thus retrieving the respective identities (5.7) and (5.10) in \cite{wang}.
\end{remark}

\section{Harmonic and hyperharmonic polynomials}

In \cite[Equation (28)]{cheon}, Cheon and El-Mikkawy defined the harmonic polynomials $H_k(x)$ by the generating function
\begin{equation}\label{gfhap}
\sum_{k=0}^{\infty} H_k(x) t^k = \frac{-\ln (1-t)}{t (1-t)^{1-x}},
\end{equation}
where $H_k(0) = H_{k+1}$. Comparing \eqref{gfhap} with the generating function for the hyperharmonic polynomials in \eqref{gfhp}, we can readily establish the relationship between the harmonic $H_k(x)$ and the hyperharmonic $H_k^{(x)}$ polynomials, namely
\begin{align}
H_k(x) & = H_{k+1}^{(1-x)},  \quad k \geq 0,  \label{hh1}
\intertext{or, equivalently,}
H_k(x) & = \frac{(-1)^k}{k!} D_{k}(x-1),  \quad  k \geq 0.  \label{hh2}
\end{align}
Thus, using \eqref{hh1} and \eqref{th:23}, we obtain the following explicit representation for the harmonic polynomials (see \cite[Remark 5.2]{cere})
\begin{equation}\label{hh3}
H_k(x) = \sum_{j=0}^{k} \frac{(-1)^j}{k+1-j} \binom{x-1}{j}.
\end{equation}
Alternatively, from \eqref{hh1} and \eqref{frac2}, and applying \eqref{relation2}, we get
\begin{equation*}
H_k(x) = \sum_{t=1}^{k+1} \binom{k+1-t-x}{k+1-t} \frac{1}{t},
\end{equation*}
in accordance with \cite[Theorem 5.4]{cheon}. Likewise, from \eqref{hh1} and \eqref{frac3}, and applying \eqref{relation2}, we find that
\begin{equation}\label{cheon}
H_k(x) = \sum_{j=0}^{k} (-1)^{k-j} \binom{x}{k-j} H_{j+1},
\end{equation}
which is just \cite[Equation (33)]{cheon}.

Incidentally, it is worth pointing out that combining \eqref{hh2} and \eqref{cheon} allows us to express the Daehee polynomials of the first kind in terms of the harmonic numbers as follows
\begin{equation}\label{dae0}
D_k(x) = k! \sum_{j=0}^{k} (-1)^j \binom{x+1}{k-j} H_{j+1},
\end{equation}
which may be compared with \eqref{exp11}. Notice that, for $x=0$, equation \eqref{dae0} reads as
\begin{equation}\label{dae1}
D_k = k! \sum_{j=0}^{k} (-1)^j \binom{1}{k-j} H_{j+1} = (-1)^k k! \big( H_{k+1} - H_k \big).
\end{equation}
Actually, \eqref{dae1} is a particular case of the following result obtained in \cite[Theorem 1]{hyper}
\begin{equation*}
D_k = k! \sum_{j=0}^{k} (-1)^j \binom{r}{k-j} H_{j+1}^{(r)},
\end{equation*}
which holds for any integer $r \geq 0$. On the other hand, as a by-product, from \eqref{dae0} and \eqref{th:11} we obtain
\begin{equation*}
\frac{\text{d}}{\text{d}x} \binom{x}{k+1} = \sum_{j=0}^{k} (-1)^j \binom{x+1}{k-j} H_{j+1}.
\end{equation*}

Let us also observe that, by using \eqref{hh3} in \eqref{gen1}, we get
\begin{equation}\label{can2}
\sum_{j=0}^{k} \overline{\genfrac{[}{]}{0pt}{}{k+1}{j+1}} B_j(x) = (-1)^k k! H_k(x),
\end{equation}
thus retrieving \cite[Theorem 5.1]{cheon}, in a slightly different form. In particular, for $x=0$ the last expression reduces to the well-known identity
\begin{equation*}
\sum_{j=0}^{k} (-1)^j \genfrac{[}{]}{0pt}{}{k+1}{j+1} B_j = k! H_{k+1}.
\end{equation*}

Lastly, by means of \eqref{th:11} and \eqref{hh2}, we get yet another formula for the harmonic polynomials, namely
\begin{equation*}
H_k(x) = (-1)^k \frac{\text{d}}{\text{d}x} \binom{x-1}{k+1}.
\end{equation*}
As a simple application of this formula, for $k=4$ we get
\begin{equation*}
H_4(x) = \frac{\text{d}}{\text{d}x} \binom{x-1}{5} = \tfrac{1}{24}x^4 - \tfrac{1}{2}x^3 + \tfrac{17}{8}x^2 -
\tfrac{15}{4}x + \tfrac{137}{60}.
\end{equation*}

\section{Conclusion}

As is well known, see e.g. \cite{broder} and \cite[Chapter 8]{mezo}, for integer $r \geq 0$, the (unsigned) $r$-Stirling numbers of the first and second kind, denoted by $\genfrac{[}{]}{0pt}{}{k+r}{j+r}_r$ and $\genfrac{\{}{\}}{0pt}{}{k+r}{j+r}_r$, respectively, are defined by the exponential generating functions
\begin{align*}
& \sum_{n=k}^{\infty} \genfrac{[}{]}{0pt}{}{n+r}{k+r}_r \, \frac{t^n}{n!} = \frac{1}{k!} \left( \frac{1}{1-t}\right)^{r}
\left[ \ln{\left( \frac{1}{1-x} \right)} \right]^k, 
\intertext{and}
& \sum_{n=k}^{\infty} \genfrac{\{}{\}}{0pt}{}{n+r}{k+r}_r \, \frac{t^n}{n!} = \frac{1}{k!} e^{rt} \big( e^t -1 \big)^k .
\end{align*}
By definition, we have $\genfrac{[}{]}{0pt}{}{n}{k}_0 = \genfrac{[}{]}{0pt}{}{n}{k}$ and $\genfrac{[}{]}{0pt}{}{n+1}{k+1}_1 = \genfrac{[}{]}{0pt}{}{n+1}{k+1}$; similarly, $\genfrac{\{}{\}}{0pt}{}{n}{k}_0 = \genfrac{\{}{\}}{0pt}{}{n}{k}$ and $\genfrac{\{}{\}}{0pt}{}{n+1}{k+1}_1 = \genfrac{\{}{\}}{0pt}{}{n+1}{k+1}$. Also, the signed $r$-Stirling numbers of the first and second kind are defined by $\overline{\genfrac{[}{]}{0pt}{}{n+r}{k+r}}_r = (-1)^{n-k} \genfrac{[}{]}{0pt}{}{n+r}{k+r}_r$ and $\overline{\genfrac{\{}{\}}{0pt}{}{n+r}{k+r}}_r = (-1)^{n-k} \genfrac{\{}{\}}{0pt}{}{n+r}{k+r}_r$. Moreover, the $r$-Stirling numbers satisfy the orthogonality relations
\begin{align*}
& \sum_{s=0}^{i} \genfrac{\{}{\}}{0pt}{}{i+r}{s+r}_r \overline{\genfrac{[}{]}{0pt}{}{s+r}{j+r}_r} = \delta_{i,j},
\quad\quad  \sum_{r=0}^{i} \overline{\genfrac{[}{]}{0pt}{}{i+r}{s+r}_r} \genfrac{\{}{\}}{0pt}{}{s+r}{j+r}_r = \delta_{i,j},
\intertext{or, equivalently,}
& \sum_{s=0}^{i} \overline{\genfrac{\{}{\}}{0pt}{}{i+r}{s+r}_r} \genfrac{[}{]}{0pt}{}{s+r}{j+r}_r = \delta_{i,j},
\quad\quad  \sum_{s=0}^{i} \genfrac{[}{]}{0pt}{}{i+r}{s+r}_r \overline{\genfrac{\{}{\}}{0pt}{}{s+r}{j+r}_r} = \delta_{i,j}.
\end{align*}

Next we show up a simple generalization of the identities \eqref{can1} and \eqref{can2} in terms of the $r$-Stirling numbers. Indeed, it turns out that
\begin{align}
\sum_{j=0}^{k} \genfrac{[}{]}{0pt}{}{k+r}{j+r}_r  B_j(x) & = k! H_{k+1}^{(x+r-1)}, \label{sgen1}
\intertext{and}
\sum_{j=0}^{k} \overline{\genfrac{[}{]}{0pt}{}{k+r}{j+r}}_r  B_j(x) & = (-1)^k k! H_{k}(x-r+1), \label{sgen2}
\end{align}
with inverse relations
\begin{align}
B_k(x) & = \sum_{j=0}^{k} j! \overline{\genfrac{\{}{\}}{0pt}{}{k+r}{j+r}}_r  H_{j+1}^{(x+r-1)}, \label{sgen3}
\intertext{and}
B_k(x) & = \sum_{j=0}^{k} (-1)^j j! \genfrac{\{}{\}}{0pt}{}{k+r}{j+r}_r  H_{j}(x-r+1), \label{sgen4}
\end{align}
where $H_{k}^{(x)}$ and $H_k(x)$  are the hyperharmonic and harmonic polynomials, respectively. Clearly, identity \eqref{sgen1} [\eqref{sgen2}] reduces to \eqref{can1} [\eqref{can2}] when $r=1$. Furthermore, recalling \eqref{th:22} [\eqref{hh2}], identity \eqref{sgen1} [\eqref{sgen2}] reduces to \eqref{def21} [\eqref{def11}] when $r=0$. On the other hand, identity \eqref{sgen3} [\eqref{sgen4}] reduces to \eqref{gen4} [\eqref{gen2}] when $r=1$. Furthermore, recalling \eqref{th:22} [\eqref{hh2}], identity \eqref{sgen3} [\eqref{sgen4}] reduces to \eqref{def22} [\eqref{def12}] when $r=0$.

Identities \eqref{sgen1}--\eqref{sgen4} can be derived by means of the generating functions for the $r$-Stirling numbers, Bernoulli, hyperharmonic, and harmonic polynomials. This having said, however, it should be stressed that identity \eqref{sgen1} (and, as a consequence, its inverse \eqref{sgen3}) arises as a special case (see Remark \ref{remark} below) of the more general identity established by Karg{\i}n {\it et al.\/} in \cite[Theorem 1]{kargin}, namely
\begin{equation}\label{kargin}
\sum_{j=0}^{k} \genfrac{[}{]}{0pt}{}{k+r}{j+r}_r  B_j^{(p)}(x) = k! H_{k+1}^{(p,x+r)}, \quad k,r \geq 0,
\end{equation}
where $B_k^{(p)}(x)$ are the poly-Bernoulli polynomials \cite{bayad}, and $H_n^{(p,r)}$ are the so-called {\it generalized hyperharmonic numbers\/} $H_n^{(p,r)}$ which are defined recursively by \cite{dil3,kargin}
\begin{equation*}
H_n^{(p,r)} = \sum_{k=1}^{n} H_k^{(p,r-1)},
\end{equation*}
with $H_k^{(p,0)} = 1/ k^p$ (for $k \geq 1$), and $H_n^{(p,1)} = \sum_{k=1}^{n} 1/k^p$. Furthermore, for $p=1$, the generalized hyperharmonic numbers $H_k^{(1,r)}$ reduce to the standard hyperharmonic numbers defined in \eqref{hhn}.

We conclude this paper with the following remarks.

\begin{remark}\label{remark}
As noted in \cite{kargin}, by using the relation $B_k^{(1)}(x-1) = B_k(x)$, and taking $p=1$ and $x \to x-1$ in \eqref{kargin}, we get \eqref{sgen1}.
\end{remark}

\begin{remark}
For $x=0$ and $r=1$, the identity \eqref{kargin} becomes
\begin{equation}\label{benyi}
\sum_{j=0}^{k} \genfrac{[}{]}{0pt}{}{k+1}{j+1}  B_j^{(p)} = k! H_{k+1}^{(p,1)},
\end{equation}
where $B_j^{(p)} = B_j^{(p)}(0)$ is the $j$th poly-Bernoulli number with index $p \in \mathbb{Z}$. On the other hand, for integer $k \geq 0$, we have that $S_k(n) = \sum_{i=1}^n i^k = H_n^{(-k,1)}$. Therefore,
using \eqref{benyi}, $S_k(n)$ can be expressed in the form
\begin{equation}\label{benyi2}
S_k(n) = \frac{1}{(n-1)!} \sum_{j=0}^{n-1} \genfrac{[}{]}{0pt}{}{n}{j+1}  B_j^{(-k)},   \quad k \geq 0, n\geq 1,
\end{equation}
where the poly-Bernoulli number with negative index $B_j^{(-k)}$ is given explicitly by \cite{benyi}
\begin{equation*}
B_j^{(-k)} = (-1)^j \sum_{i=0}^{j} (-1)^i i! \genfrac{\{}{\}}{0pt}{}{j}{i} (i+1)^k.
\end{equation*}
Incidentally, formula \eqref{benyi2} has been recently derived in \cite[Corollary 2.18]{benyi2} in a slightly different form; see also \cite[Corollary 2.17]{benyi2} for a generalization of \eqref{benyi2} involving the hypersums of powers of integers \cite{lai}.
\end{remark}

\begin{remark}
Recalling that $B_k(1) = (-1)^k B_k$, and putting $x=1$ in \eqref{sgen3}, we obtain
\begin{equation*}
B_k = \sum_{j=0}^{k} (-1)^j j! \genfrac{\{}{\}}{0pt}{}{k+r}{j+r}_r H_{j+1}^{(r)},
\end{equation*}
which holds for any integers $k,r \geq 0$. This identity also follows by letting $p=1$ and $q+1 =r$ in \cite[Equation (16)]{kargin}. In particular, for $r=0$ we recover \eqref{ber1}.
\end{remark}

\begin{remark}
From \eqref{hh1}, it follows that $H_k(1) = H_{k+1}^{(0)} = \frac{1}{k+1}$. Therefore, by setting $x =r$ in \eqref{sgen4}, for integer $r \geq 0$, we obtain
\begin{equation*}
B_k(r) = \sum_{j=0}^{k} (-1)^j j! \genfrac{\{}{\}}{0pt}{}{k+r}{j+r}_r  H_{j}(1) = \sum_{j=0}^{k} (-1)^j \frac{j!}{j+1} \genfrac{\{}{\}}{0pt}{}{k+r}{j+r}_r ,
\end{equation*}
thus retrieving \cite[Theorem 1]{guo} (see also \cite{boya3}). This expression for $B_k(r)$ can also be obtained by taking $x \to 1-r$ in \eqref{sgen3} and using the property that $B_k(1-x) = (-1)^k B_k(x)$, $k \geq 0$ (\cite[Equation (18)]{atlan}).
\end{remark}

\begin{remark}
From \cite[Theorem 2]{benjamin}, we know that, for $n,r \geq 0$, $n ! H_n^{(r)} = \genfrac{[}{]}{0pt}{}{n+r}{r+1}_r$. Hence, from \eqref{th:22} it readily follows that $(k+1) \widehat{D}_k(r+1) = (k+1)! H_{k+1}^{(r)} = \genfrac{[}{]}{0pt}{}{k+r+1}{r+1}_r$. By \eqref{def22}, this in turn implies that
\begin{equation*}
B_k(r+1) = \sum_{j=0}^{k} \frac{1}{j+1} \overline{\genfrac{\{}{\}}{0pt}{}{k}{j}} \genfrac{[}{]}{0pt}{}{j+r+1}{r+1}_r,
\end{equation*}
which, of course, reduces to \eqref{bin2} when $r=1$.
\end{remark}

\begin{remark}
By using \eqref{cheon} in \eqref{sgen4}, we obtain
\begin{equation*}
B_k(x) = \sum_{j=0}^{k} j! \genfrac{\{}{\}}{0pt}{}{k+r}{j+r}_r \sum_{i=0}^j (-1)^i \binom{x-r+1}{j-i} H_{i+1}.
\end{equation*}
In particular, for $r=1$ we get the following identity expressing the Bernoulli polynomials in terms of the Stirling numbers of the second kind and the harmonic numbers
\begin{equation*}
B_k(x) = \sum_{j=0}^{k} j! \genfrac{\{}{\}}{0pt}{}{k+1}{j+1} \sum_{i=0}^j (-1)^i \binom{x}{j-i} H_{i+1}.
\end{equation*}
\end{remark}

\end{document}